\newcommand{\gr}[1]{\foreignlanguage{greek}{#1}}
\newtheorem{theorem}{Theorem}
\newtheorem{lemma}{Lemma}
\newtheorem{observation}{Observation}
\newtheorem{corollary}{Corollary}
\newenvironment{proof}{\noindent {\bf Proof.}}{$\Box$\medskip}
\newcommand{\noindents}[2]{\vspace{-#1mm}\noindent {#2}}
\newcommand{\remove}[1]{}
\newcommand{\s}{\mbox{$\mathbb{S}_{0}$}}
\newcommand{\wbw}{{\bf wbw}}
\newcommand{\wxy}[1]{{\bf wbw}\mbox{$_{#1}(x,y)$}}
\newcommand{\tria}{_{\mbox{\tiny $\Delta$}}}
\newcommand{\ourxi}{\mbox{\gr{\bf ξ}}}
\newcommand{\ourlambda}{\mbox{\gr{\bf λ}}}
\newcommand{\smallourlambda}{\mbox{\gr{\bf\footnotesize λ}}}
\begin{document}

\title{Nearly Planar Graphs and $\lambda$-flat Graphs\thanks{The work of the two last authors 
was {co-financed by the European Union (European Social Fund - ESF) and Greek national funds through the Operational Program ``Education and Lifelong Learning'' of the National Strategic Reference Framework (NSRF) - Research Funding Program: ``Thales. Investing in knowledge society through the European Social Fund''.}} $^{,}$\thanks{{Emails: 
\mbox{\sf a.grigoriev@maastrichtuniversity.nl},
\mbox{\sf akoutson@math.uoa.gr},
\mbox{\sf sedthilk@thilikos.info}}}}
\author{Alexander Grigoriev\thanks{School of Business and Economics
Department of Quantitative Economics, Maastricht University}\ \ , Athanassios Koutsonas\thanks{Department of Mathematics, National and Kapodistrian University of Athens, Athens, Greece.}\ \ ,  Dimitrios M. Thilikos$^{\rm \S,}$\thanks{AlGCo project-team, CNRS, LIRMM, France.}}
\date{\empty}

\maketitle
\vspace{-14mm}
\begin{abstract}

\noindent A graph $G$ is {\em $\xi$-nearly planar} if it can be embedded in the sphere 
so that each of its edges is crossed at most $\xi$ times.
The family of $\xi$-nearly planar graphs is widely extending the
notion of planarity.
We introduce an alternative parameterized graph family extending the notion of planarity, the  $\lambda${\em-flat graphs}, this time defined as powers of plane graphs in regard to a novel notion of distance, the {\em wall-by-wall distance}. We show that the two parameterized graph classes are parametrically equivalent.
\noindent \end{abstract}

{\small 
\noindent{\bf Keywords}: {\small planarity, crossing number, graph powers }
}

\section{Introduction}
\label{intro}

The class of {\em  $\xi$-nearly planar graphs} 
contains all graphs  embeddable in the plane 
with at most $\xi$ crossings per edge (see Figure~\ref{fig:this2one} for an example of a $2$-nearly planar graph). This graph family was introduced for the first time by Pach and Tóth~\cite{PachT97grap} 
under the name {\sl $\xi$-quasi planar graphs} (we adopt  the term {\em  $\xi$-nearly planar} as the term  {\em $\xi$-quasi planar} has been extensively 
used to denote embedded graphs without $\xi$ pairwise crossing edges, see~\cite{FoxPS11then,Suk06kqua,AgarwalAPPS97quas,AckermanT07note}).

\begin{figure}[ht]
\begin{center}
	\scalebox{.4}{\includegraphics{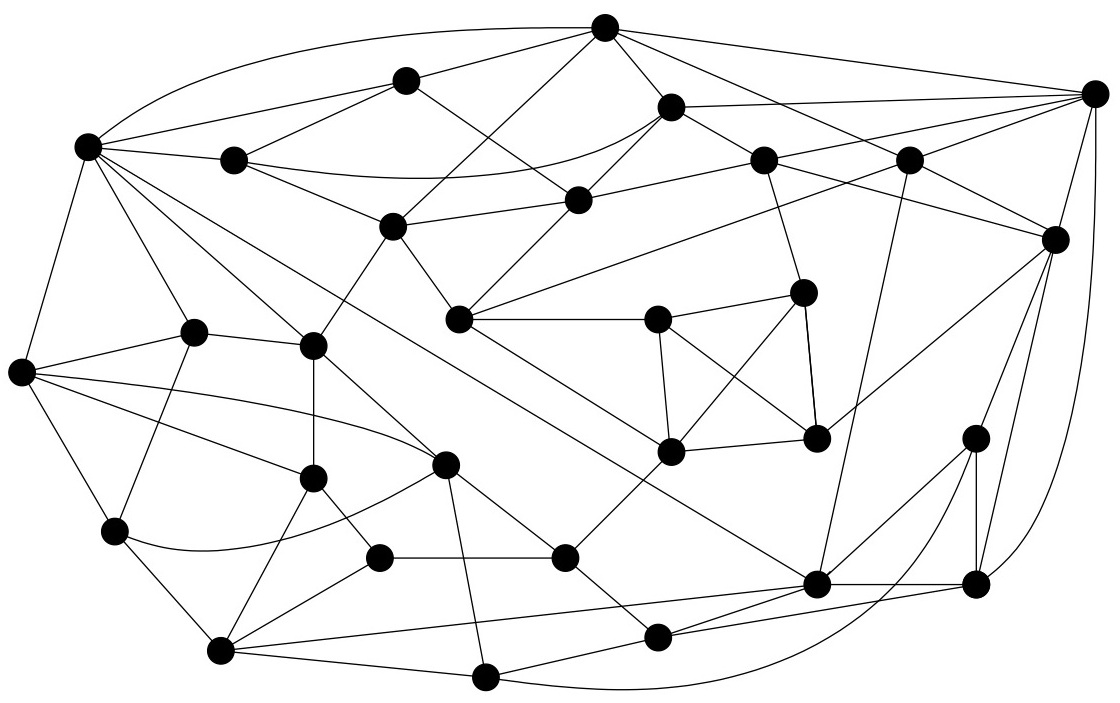}}
\end{center}
\caption{\small A $2$-nearly planar graph.}
\label{fig:this2one}
\end{figure}

The class of {$\xi$-nearly planar graphs} does not have  the good closedness properties of topologically-defined graph classes. In particular, they are not  closed under topological minors or contractions -- see Corollary~\ref{notclosed} -- however  they are closed  under subgraphs.
We stress that the study of nearly planar graphs marks a direction that is ``orthogonal'' to the one  
of (topological) minor free graphs as every graph is a topological minor of some
$1$-nearly planar graph:  take any drawing of a graph in the plane  
and for each edge $e$, introduce a vertex in each segment delimited by two consecutive crossings
along $e$.  

Several combinatorial  and algorithmic results on planar graphs 
have been extended to $\xi$-nearly planar graphs. For instance,
according Pach and Tóth~\cite{PachT97grap}, 
$\xi$-nearly planar graphs are sparse: 
each $n$-vertex $\xi$-nearly planar graph  has at most $4.108\cdot \sqrt{\xi}\cdot n$ edges.
Moreover Bodlaender and Grigoriev proved that several 
problems in graphs admit EPTAS (Efficient Polynomial Time Approximation Schemes)
when restricted to 
$\xi$-nearly planar graphs~\cite{GrigorievB07algo}.\medskip%

Consider a distance metric between the vertices of graphs.
We define the {\em $\lambda$-power} of a graph $G$ under this metric
as the graph with the same vertex set as $G$ and 
where two vertices $x$ and $y$ are adjacent if and only if their distance 
is upper bounded by $\lambda$. 
Our aim  is to define such a metric  
that ``captures'' the concept of $\xi$-nearly planar graphs.
This metric is defined on embedded graphs and  is called 
{\sl wall-by-wall} distance 
%
(for the definition  
see Section~\ref{sec:classes}).

To explain how  wall-by-wall expresses $\xi$-nearly planar graphs, we 
 introduce the concept of {\em $\lambda$-flat graphs} that are 
subgraphs of the $\lambda$-powers of plane graphs under the new metric. 
Our main result is a proof,  that the classes of $\lambda$-flat graphs
and $\xi$-nearly planar graphs are {\em parametrically equivalent}, i.e.,
there are functions $g$ and $h$ such that 
every $\xi$-nearly planar graph is $g(\xi)$-flat and every $\lambda$-flat 
graph is $h(\lambda)$-nearly planar.
This enables us to represent every nearly planar graph $G$ by a suitably-defined 
plane graph $H$ whose structure 
``geometrically represents'' $G$. 

%
%

The paper is organized as follows: In Section~\ref{defprel},
we give some basic definitions and results.
In Section~\ref{sec:classes} we introduce the notion of $\lambda$-flat 
graphs and we prove several properties of them.
The proof of our main result is presented in Section~\ref{parma}.

\section{Definitions and preliminaries}
\label{defprel}

All graphs in this paper are undirected, without loops, and  may 
have multiple edges.
If a graph has no multiple edges or loops we call it {\em simple}.
Given a graph $G$, we denote by $V(G)$ its vertex set and by $E(G)$ its edge set. 
We say that $H$ is a {\em subgraph of $G$ via
an injection $\tau: V(H)\rightarrow V(G)$}
if for every edge $\{x,y\}\in E(H)$, $\{\tau(x),\tau(y)\}$ is an edge of $G$
and we denote this fact by $H\subseteq_{\tau} G$. We also say that $H$ is a subgraph of $G$
if $H\subseteq_{\tau} G$ via such an injection $\tau$ and we write $H\subseteq G$.
For any set of vertices $S\subseteq V(G)$, we denote by $G[S]$ the subgraph of $G$ induced by the vertices from $S$. 
Moreover, we denote by $N_{G}(S)$ the neighbors of the vertices in $S$ that do not belong to $S$.
We also set $N_{G}[S]=S\cup N_{G}(S)$ and $\partial_{G}(S)=N_{G}(V(G)\setminus S)$.

Given two graphs $H$ and $G$, we write $H\preceq G$ and call $H$ a {\em minor} of $G$, if $H$ can be obtained from a subgraph of $G$ by edge contractions
(the {\em contraction} of an edge $e=\{x,y\}$  in a graph $G$
 is the operation of replacing $x$ and $y$ by a new vertex $x_{e}$ that is made adjacent
with all the neighbors of $x$ and $y$ in $G$ that are different from $x$ and $y$).
Moreover, we say that $H$ is a {\em contraction} of $G$
if $H$ can be obtained from $G$ by contracting edges.

We say that $H$ is a {\em topological minor} of $G$ if 
a subdivision of $H$ is a subgraph of $G$ 
(the {\em subdivision} of an edge $e=\{x,y\}$ is its replacement by a path of length 2 whose 
endpoints are $x$ and $y$). 

To simplify notations, we use  “$O_{k}(n^c)$” instead of saying that “$f(k) \cdot  n^c$ for some recursive function $f : \Bbb{N} → \Bbb{N}$”. 

\noindents{-2}{\bf Plane graphs}.
Throughout this paper, we use the term {\em graph embedding} to denote 
an embedding of a graph in the sphere \s\ where
if two edges cross, they  cross to a point that is not a vertex of the graph.
We use the term {\em plane graph} for an embedding of a graph 
without crossings. 
A graph is {\em planar} if it has some planar embedding.
Given a plane graph $G$, we denote by $F(G)$ the set of faces of $G$, i.e.,
the connected components of $\s\setminus G$, that are open sets. 

\noindents{-2}{\bf Radial and medial graph}.
Given a plane graph $G$ with at least one edge,  we define its {\em radial graph} $R_{G}$ as the plane graph whose vertex set is $V(G)\cup V(G^{*})$ ($G^{*}$ is the dual graph of $G$) and whose
edges are defined as follows: let ${\cal C}=\{C_{1},\ldots,C_{r}\}$ be the connected components of $\s\setminus (G\cup G^{*})$ isomorphic to open discs. Observe
that for any $i=1,\ldots,r$, the component $C_{i}$ is an open set whose boundary is incident to one vertex $v_{i}\in V(G)$ and to one vertex $u_{i}\in V(G^{*})$.
The edge set of $R_{G}$ is the set $E(R_{G})=\{\{v_{i},u_{i}\}:\ i=1,\ldots,r\}$ where edge $\{v_{i},u_{i}\}$ has multiplicity $1$ if both, $v_{i}$ and $u_{i}$,
have degree at least $2$ in $G$ and $G^{*}$, respectively, otherwise, the edge multiplicity of $\{v_{i},u_{i}\}$ is $2$. Notice that $R_{G}=(V(G)\cup F(G),E(R_{G}))$ is a bipartite
graph, whose parts are the vertex and face sets of $G$, respectively.

Given a plane graph $G$, we define its {\em medial graph} $M_{G}$  as the dual of its radial graph, i.e.,
$M_{G}=R_{G}^{*}$. Clearly, the vertices of $G$ correspond to faces of $M_{G}$.

\noindents{-2}{\bf Plane transformation}.
We describe a procedure we will be using in order to transform any simple embedding of a (not necessarily planar) graph $G$ on the sphere, to a plane graph $G_{p}$. 
\begin{figure}[h]
\begin{center}
	\includegraphics[width= 4.9in]{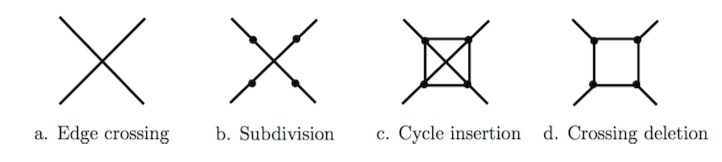}
	 \end{center}
	\caption{\small The four steps of the transformation of $G$ to $G_{p}$.}\label{fig:gadget} 
 \end{figure}
%
\begin{itemize}
\setlength{\itemsep}{-.0pt}
\item[(a)] modify the embedding of $G$ so that 
no more than two edges are crossing at the same point. 

\item[(b)] Subdivide the edges of $G$ and draw the resulting graphs such 
that each edge is crossed only once. 
\end{itemize}

\noindent \ \ \ In the resulting graph, for every crossed pair of edges $e=\{x,y\}$ and $e'=\{u,v\}$
\begin{itemize}
\setlength{\itemsep}{-.0pt}

\item[(c)] add the 
cycle $xuyv$ (notice that this cycle can be drawn without creating further crossings),
and 

\item[(d)] remove every such pair of crossing edges $e$ and $e'$.
\end{itemize}

\noindent Notice that, by construction,  $G_p$ is indeed a plane graph.
For an illustration of the construction of $G_p$,
see Figure~\ref{fig:gadget}. We will be referring to this procedure as {\em plane transformation} of a graph.

\section{The classes of $\lambda$-flat and $\xi$-nearly planar graphs.}
\label{sec:classes}

Let $G$ be a  plane graph. 
For any vertex $v\in V(G)$, the edges incident to $v$ appear in the drawing in the natural cyclic order which we denote  by ${\rm \sigma}_v$ (such a cyclic order is well defined as $G$ is loopless). 
Two edges are called {\em attached} if they share a vertex, say $v\in V(G)$, and appear consecutively in the cyclic order ${\rm \sigma}_v$ or its inverse.

\noindents{-2}{\bf Wall by wall distance}.
Let $e',e''\in E(G)$ be two edges in $G$. An $(e',e'')$-{\em wall-by-wall walk} 
is a sequence of distinct edges $e_1,e_2,\ldots, e_\ell$ such that
$e_1=e'$ and $e_\ell=e''$ and every two consecutive edges in $e_1,e_2,\ldots, e_\ell$ are attached. Intuitively, such a walk can be explained in the following way. Let the plane graph represent a building map where edges represent walls, each having a door. Then, a \emph{wall-by-wall-walk}, denoted also as a \wbw-walk, is a walk from wall $e'$ to wall $e''$ continuously touching walls and possibly going through doors.

Now, let $x$ and $y$ be two vertices of the plane graph $G$. The \wbw-walk between $x$ and $y$, also called {\em $(x,y)$-\wbw-walk}, is an $(e_x,e_y)$-\wbw-walk
such that the edges  $e_x$ and $e_y$ are incident to the vertices $x$ and $y$, respectively.
The {\em wall-by-wall distance between $x$ and $y$}, denoted by \wxy{G}, 
is the number of edges in the shortest $(x,y)$-\wbw-walk, if such a walk exists, otherwise it is 
infinite (in the case that no path in $G$ connects $x$ and $y$).
It is easy to verify that if $\wxy{G}\leq k$ for some integer $k$, then there exists a {\em wall-by-wall path} of length $k$, 
namely a \wbw-walk whose edges form an acyclic subgraph of $G$. 
Again, we use the notations {\em \wbw-path} and {\em $(x,y)$-\wbw-path}.

Notice that the wall-by-wall distance is lower bounded by the standard distance in graphs. However, the difference between the two distances may be arbitrarily large, [e.g. the distance of two non adjacent vertices of a star].
The following observation gives an easy way to express the wall by wall distance in terms of the common distance.

\begin{observation}
\label{obs:tdhisplo}
Let $G$ be a plane graph and let $M_{G}$ be its medial.
Let also $x$ and $y$ be two vertices in $G$ and let $f_{x}$ and $f_{y}$ be the faces
of $M_{G}$ corresponding to $x$ and $y$ respectively. 
Then the wall-by-wall distance between $x$ and $y$ in $G$ is one more than the 
minimum distance in $M_{G}$ between a vertex in the boundary of $f_{x}$ and a vertex in the boundary in $f_{y}$.
\end{observation}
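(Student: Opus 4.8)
The plan is to reinterpret the wall-by-wall distance as an ordinary graph distance in $M_{G}$ by unwinding the double dualization $M_{G}=R_{G}^{*}$. The first step is to identify the vertices, edges and faces of $M_{G}$ in terms of $G$. Every edge $e$ of $G$ is enclosed by a single quadrilateral region of $\s\setminus R_{G}$, whose four corners are the two endpoints of $e$ together with the two faces of $G$ adjacent to $e$; hence the faces of $R_{G}$ are in natural bijection with $E(G)$, and so the vertices of $M_{G}=R_{G}^{*}$ are in bijection with $E(G)$. From now on I identify each vertex of $M_{G}$ with the corresponding edge of $G$.

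The key step is to show that two vertices of $M_{G}$ are adjacent precisely when the corresponding edges of $G$ are attached. An edge of $R_{G}$ records an incidence between a vertex $v\in V(G)$ and a face $\phi\in F(G)$, that is, a corner of the embedding; the two edges of $G$ that flank this corner are a pair of edges incident to $v$ that are consecutive in $\sigma_{v}$, i.e.\ an attached pair. Dualizing, the edge of $M_{G}=R_{G}^{*}$ crossing this corner joins exactly these two edges of $G$, so adjacency in $M_{G}$ coincides with the attachment relation (the edge multiplicities introduced in the definition of $R_{G}$ are immaterial for distances). The same duality shows that the face $f_{x}$ of $M_{G}$ corresponds to $x$ viewed as a vertex of $R_{G}$, and that its boundary is traversed exactly by the vertices of $M_{G}$ representing the edges of $G$ incident to $x$; thus a vertex on the boundary of $f_{x}$ is precisely a choice of an edge $e_{x}$ incident to $x$, and similarly for $f_{y}$.

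With these identifications the claim reduces to bookkeeping. A shortest $(x,y)$-\wbw-walk $e_{1},\ldots,e_{\ell}$ is, by the adjacency description, a walk in $M_{G}$ from $e_{1}\in\partial f_{x}$ to $e_{\ell}\in\partial f_{y}$ using $\ell-1$ edges; conversely a shortest path in $M_{G}$ between $\partial f_{x}$ and $\partial f_{y}$ has pairwise distinct vertices, hence yields a sequence of distinct attached edges of $G$ that is a legitimate $(x,y)$-\wbw-walk. Since the \wbw-walk is measured by its number of edges $\ell$ whereas the associated $M_{G}$-walk has $\ell-1$ edges, minimizing over all admissible walks gives $\wxy{G}=1+\min\{\,\operatorname{dist}_{M_{G}}(e_{x},e_{y}):\, e_{x}\in\partial f_{x},\ e_{y}\in\partial f_{y}\,\}$, which is the asserted statement. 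The point requiring care is the distinct-edges condition in the definition of a \wbw-walk: a shortest walk in $M_{G}$ is automatically a path, so it maps back to a walk with distinct edges, and no mismatch arises between graph distance in $M_{G}$ and the distinct-edge \wbw-walks. The main obstacle is really the second step, namely verifying that the (double) dual adjacency of $M_{G}$ encodes exactly the attachment relation.
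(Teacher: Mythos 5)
The paper states this observation without proof (it is offered as immediate from the definitions), so there is no official argument to compare against line by line. Your strategy --- identify the vertices of $M_{G}$ with the edges of $G$, adjacency in $M_{G}$ with the attachment relation, and the boundary of $f_{x}$ with the edges incident to $x$, after which the ``one more'' is pure bookkeeping --- is clearly the intended justification, and it is correct whenever $G$ has minimum degree at least $2$.

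However, under the paper's actual definition of $R_{G}$, with its multiplicity-$2$ rule, your opening claim that the faces of $R_{G}$ are in natural bijection with $E(G)$ fails as soon as $G$ has a vertex of degree $1$ --- so for every tree, and in particular for the star that the paper invokes in the sentence immediately preceding the observation. At a pendant vertex $v$ with edge $e$ lying in face $f$, the radial edge $\{v,u_{f}\}$ is doubled, and the two parallel copies bound, besides the quadrilateral face containing $e$, an extra bigon face of $R_{G}$ containing no edge of $G$ (this is forced: adding a parallel edge to a plane graph always adds a face). Hence $M_{G}$ acquires vertices that correspond to no edge of $G$. Your parenthetical that ``the edge multiplicities are immaterial for distances'' does not cover this, because the doubling changes the vertex set of $M_{G}$, not merely edge multiplicities; and the problem is not cosmetic, since for a pendant $x$ the boundary of $f_{x}$ contains such a bigon vertex, so a path realizing your minimum could a priori start at a vertex that is not an edge of $G$, and your translation of shortest $M_{G}$-paths back into wall-by-wall walks breaks down exactly there. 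The repair is short: each bigon vertex has a unique neighbour, namely the quad vertex of its pendant edge, and that neighbour lies on the same face boundary $\partial f_{x}$; consequently no shortest path between $\partial f_{x}$ and $\partial f_{y}$ can start, end, or pass through a bigon vertex, the minimum is attained on quad vertices, and on those your identifications hold verbatim. One further proviso worth stating explicitly (the paper leaves it implicit as well): $G$ must be connected, for $M_{G}$, being a dual graph, is always connected, so for disconnected $G$ the right-hand side of the formula is finite while $\wbw_{G}(x,y)$ can be infinite.
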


\noindents{-2}{\bf $\lambda$-flat graphs}.
We consider taking powers of a plane graph with respect to the new metric. Let $H$ be a simple plane graph and let $\lambda$ be a positive integer. For $\lambda\geq 1$, 
we define
graph $H^ \lambda$ as a graph with vertex set $V(H)$ and edge set $E(H)\cup \tilde{E}$, where 
$\tilde{E}$ contains all pairs $\{x,y\}$ that do not belong to $E(H)$ and 
$\wbw_{H}(x,y)\leq \lambda$.
We say that a graph $G$ is {\em $\lambda$-flat} if it is a subgraph of  $H^\lambda$  for some planar graph $H$. Notice that, according to the above definition, $1$-flat graphs are exactly the planar graphs.  

\begin{figure}[ht]
\begin{center}
\scalebox{.32}{\includegraphics{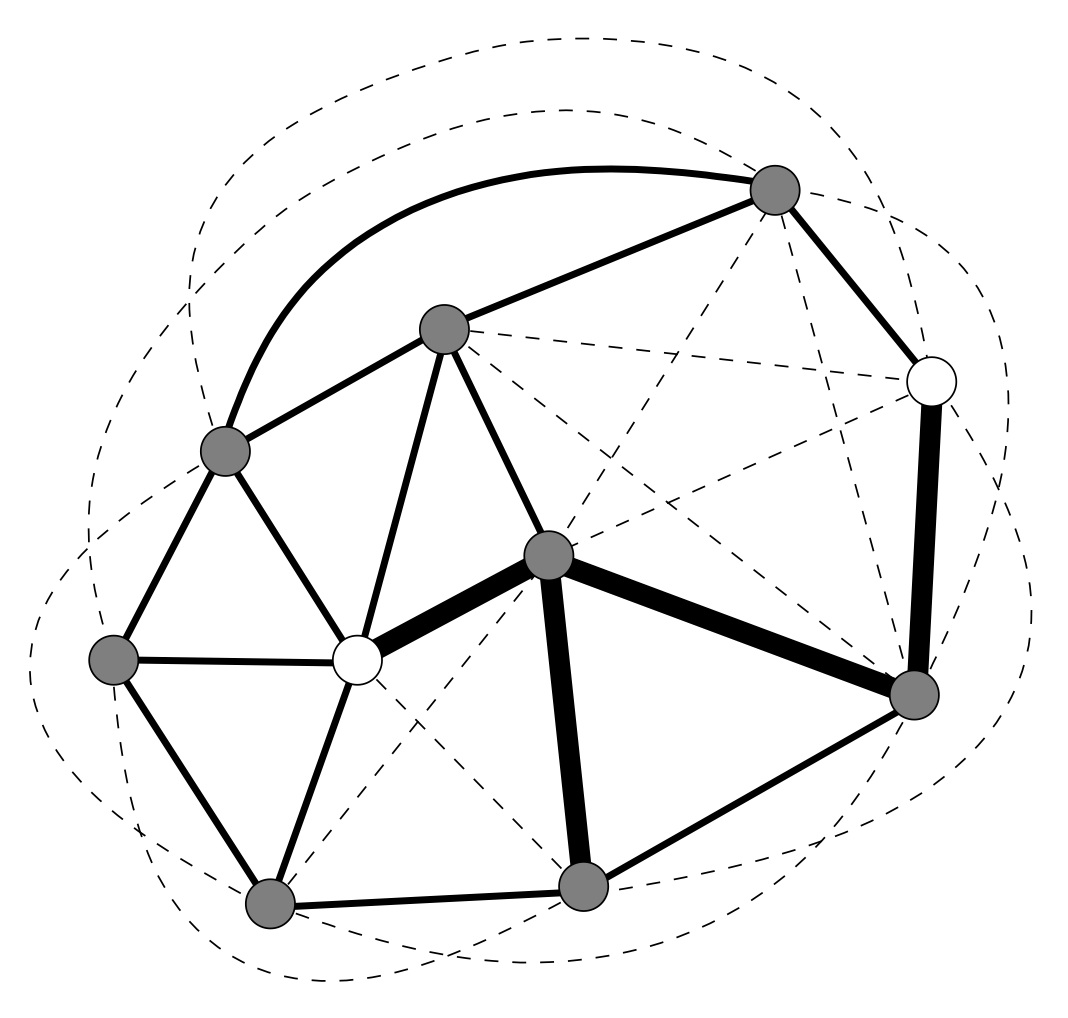}}
\end{center}\vspace{-3mm}
\caption{\small Graphs $G$ and $G^2$ (the dotted edges are the edges of $G^2$ that are not in $G$).
Notice that every \wbw-path between the two white vertices in $G$ has length at least 4 (one of these paths is depicted by the bold edges).}
\label{fig:thiswbw}
\end{figure}

\noindents{-2}{\bf $\xi$-nearly planar graphs}. 
We use the term {\em $\xi$-nearly planar graph embedding} for an embedding with at most  $\xi$ crossings per edge.
A graph $G$ is {\em $\xi$-nearly planar} if it has a $\xi$-nearly planar graph embedding.
For simplicity, we use the same notation for both $\xi$-nearly planar graphs and their embeddings.
In contrary to the case of planar graphs, even the recognition
of $1$-nearly planar graphs is a {\sf NP}-hard problem~\cite{GrigorievB07algo}.
The following theorem states our main result, which links the two aforementioned graph classes.

%

\begin{theorem}
\label{equiv}
The classes of $\lambda$-flat graphs and $\xi$-nearly planar graphs are parametrically equivalent.
\end{theorem}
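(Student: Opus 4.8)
The plan is to establish parametric equivalence by proving the two required containments separately: there is a function $g$ with every $\xi$-nearly planar graph being $g(\xi)$-flat, and a function $h$ with every $\lambda$-flat graph being $h(\lambda)$-nearly planar. Both directions will be constructive, turning a drawing into a plane graph (and its power) in one direction, and a plane graph together with short \wbw-paths into a low-crossing drawing in the other.

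For the first direction, I would start from a $\xi$-nearly planar embedding of $G$ and apply the \emph{plane transformation} of Section~\ref{defprel} to obtain a plane graph $G_p$: after arranging that crossings are pairwise and isolating each crossing on a subdivided segment, every crossing is replaced by a $4$-cycle gadget $xuyv$. The original vertices of $G$ survive this process, so I would take the identity on $V(G)$ as the candidate injection and show $G\subseteq (G_p)^{g(\xi)}$. The key estimate is that each edge $\{x,y\}\in E(G)$, being crossed at most $\xi$ times, is turned into a chain of at most $\xi$ gadgets linked along the subdivided copy of the edge; since a single $4$-cycle gadget can be crossed by a \wbw-walk using only a bounded number of attached edges, the whole chain yields an $(x,y)$-\wbw-walk of length $O(\xi)$, whence $\wbw_{G_p}(x,y)\le g(\xi)=O(\xi)$. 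A final remark handles the requirement that the base graph be simple: any multi-edges created by the gadgets can be removed by one more subdivision, changing distances only by a constant factor.

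For the second direction, I would fix a planar embedding of the witness graph $H$ with $G\subseteq H^{\lambda}$ and draw each edge of $G$ as a curve that \emph{hugs} a shortest \wbw-path. Edges of $G$ lying in $E(H)$ are kept as drawn. For a virtual edge $\{x,y\}$ with $\wbw_H(x,y)\le\lambda$, I would route it inside a thin tube following its \wbw-path $e_1,\dots,e_\ell$ with $\ell\le\lambda$: because consecutive edges of the path are \emph{attached}, the curve can pivot from one wall to the next through the common corner, so it never has to cross an edge of $H$ and only interacts with other virtual curves. Here Observation~\ref{obs:tdhisplo} is convenient, as it lets me view the same routing inside the $4$-regular medial graph $M_H$, where a \wbw-path of length $\le\lambda$ becomes a path of length $\le\lambda-1$ between the faces $f_x$ and $f_y$; vertices of $G$ are then placed in these faces and edges follow the corresponding short dual curves.

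The main obstacle is entirely in the second direction and is the requirement to bound the number of crossings \emph{per edge} rather than in total: many virtual curves may run parallel along the same wall, and routing them carelessly lets a single curve be crossed arbitrarily often. I expect the resolution to be a careful nested ordering of the tubes along each wall together with a local counting argument at each wall, i.e.\ each medial vertex: since every routing curve has complexity at most $\lambda$ and the medial graph is $4$-regular, the number of other curves that can cross a fixed curve should be controllable by a function of $\lambda$ alone, giving the bound $h(\lambda)$. Making this counting uniform over all edges, independently of the size of $G$, is the delicate point on which the whole equivalence rests.
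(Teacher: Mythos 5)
Your first direction is sound and is essentially the paper's own argument (Lemma~\ref{cross:flat}): apply the plane transformation, keep the identity injection on $V(G)$, and observe that an edge crossed $\xi'\leq\xi$ times yields a \wbw-walk of length $O(\xi)$ through the chain of $4$-cycle gadgets. The gap is entirely in your second direction, and it is not just the counting you flag at the end: the routing claim itself is false. You assert that, since consecutive edges of a \wbw-path are attached, a virtual edge can be drawn ``pivoting through the common corner'' and hence \emph{never has to cross an edge of $H$}. This cannot work. The face $f_i$ shared by the attached pair $(e_i,e_{i+1})$ and the face $f_{i+1}$ shared by $(e_{i+1},e_{i+2})$ may lie on \emph{opposite} sides of $e_{i+1}$; take $H$ a grid, $x=(0,0)$, $y=(2,1)$, with the \wbw-path consisting of the edges $(0,0)(1,0)$, $(1,0)(1,1)$, $(1,1)(2,1)$ --- here $x$ and $y$ lie on no common face, so \emph{any} curve joining them must cross edges of $H$. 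Indeed the paper's own drawing (Lemma~\ref{flat:cross}) makes the new edge cross the interior old edges $e_2,\dots,e_{\ell-1}$ of the path, and then has to control both new-on-old and new-on-new crossings.

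Once old-edge crossings are unavoidable, the per-edge bound becomes the whole problem, and your proposal offers only an expectation (``I expect the resolution to be a careful nested ordering\dots''), not an argument; moreover the quantity you would need to bound --- how many virtual curves are forced through a neighborhood of a fixed wall --- is not obviously controlled by $\lambda$ alone in an arbitrary plane graph. The paper's resolution is a structural preprocessing step your proposal is missing: Lemma~\ref{triangulate} first embeds $H$ into a $3$-connected \emph{triangulated} plane graph $H_{\Delta}$, stretching \wbw-distances by only a factor of $4$. Triangulation is what makes the counting local and uniform: $3$-connectivity guarantees that an attached pair of edges determines a unique shared face (already dubious in graphs with cut vertices, which your ``corner'' picture ignores); and in a triangulated graph a new edge entering a triangular face through one side either ends at the opposite vertex or crosses one of the other two sides, while vertices at \wbw-distance at most $2$ are already adjacent. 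This bounds the branching of all routes by roughly $2^{\lambda}$ and yields the bounds of Lemma~\ref{flat:cross} ($(\lambda-3)\cdot 2^{\lambda-2}+1$ crossings on an old edge, fewer than $2^{\lambda}$ on a new edge), giving $\ourxi(G)\leq 2^{8\cdot\smallourlambda(G)}$ in Corollary~\ref{lambda:xi}. Without this reduction, or a substitute for it, your second direction does not go through.
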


From~\cite{PachT97grap}, each $n$-vertex $\xi$-nearly planar graph  has at most $4.108\cdot \sqrt{\xi}\cdot n$ edges, i.e. is sparse. An immediate consequence of 
Theorem~\ref{equiv} is that $\lambda$-flat graphs are also sparse.

\begin{corollary}
\label{smcor}
Every $\lambda$-flat graph on $n$ vertices has ${O_\lambda}(n)$ edges.
\end{corollary}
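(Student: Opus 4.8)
The plan is to derive the edge bound directly from Theorem~\ref{equiv} together with the sparsity of $\xi$-nearly planar graphs already recorded above. First I would invoke one direction of the parametric equivalence: by Theorem~\ref{equiv} there is a recursive function $h$ such that every $\lambda$-flat graph is $h(\lambda)$-nearly planar. Consequently, any $\lambda$-flat graph $G$ on $n$ vertices admits an embedding in the sphere in which each edge is crossed at most $h(\lambda)$ times.

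Next I would apply the edge bound of Pach and Tóth~\cite{PachT97grap}, which guarantees that every $n$-vertex $\xi$-nearly planar graph has at most $4.108\cdot\sqrt{\xi}\cdot n$ edges. Instantiating this with $\xi=h(\lambda)$ for the embedding of $G$ produced in the previous step gives $|E(G)|\leq 4.108\cdot\sqrt{h(\lambda)}\cdot n$.

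Finally, since $\lambda$ is treated as a fixed parameter, the coefficient $4.108\cdot\sqrt{h(\lambda)}$ depends only on $\lambda$ and is a constant with respect to $n$; hence $|E(G)|=O_\lambda(n)$, as claimed. Note that only the implication ``$\lambda$-flat $\Rightarrow$ $h(\lambda)$-nearly planar'' is needed, so there is no genuine obstacle at this stage—all the difficulty is absorbed into the proof of Theorem~\ref{equiv}. The sole point that deserves a line of verification is that $h$ is recursive, which is exactly what lets the constant hidden in the $O_\lambda$ notation conform to the convention fixed in Section~\ref{defprel}.
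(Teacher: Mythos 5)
Your proposal is correct and is essentially identical to the paper's own (one-line) justification: the paper also derives the corollary as an immediate consequence of Theorem~\ref{equiv} (the direction ``$\lambda$-flat $\Rightarrow$ $\xi$-nearly planar'' for $\xi$ depending only on $\lambda$) combined with the Pach--T\'oth bound of $4.108\cdot\sqrt{\xi}\cdot n$ edges from~\cite{PachT97grap}. Nothing is missing, and there is no circularity since the proof of Theorem~\ref{equiv} in Section~\ref{parma} does not rely on Corollary~\ref{smcor}.
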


By definition, $\lambda$-flat graphs are closed under 
subgraphs (i.e. a subgraph of a $\lambda$-flat graph is also $\lambda$-flat).  
We prove that this is not the case for  topological minors and 
contractions.

\begin{corollary}
\label{notclosed}
For every $\lambda\geq 2$, $\lambda$-flat graphs are not closed neither under taking topological minors nor under taking contractions. The same holds also for $\xi$-nearly planar graphs for $\xi\geq 1$.
\end{corollary}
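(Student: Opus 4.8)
The plan is to exhibit, for the relevant range of the parameter, a single host graph lying in each class whose topological minor and contraction is a complete graph $K_n$ that is simply too dense to belong to the class. The density obstruction is immediate: by~\cite{PachT97grap} every $n$-vertex $\xi$-nearly planar graph has at most $4.108\cdot\sqrt{\xi}\cdot n$ edges, while by Corollary~\ref{smcor} every $n$-vertex $\lambda$-flat graph has $O_\lambda(n)$ edges; since $K_n$ has $\binom{n}{2}$ edges, for each fixed $\xi\geq 1$ (resp.\ each fixed $\lambda$) there is an $n$ for which $K_n$ is not $\xi$-nearly planar (resp.\ not $\lambda$-flat). It thus remains to place a suitable supergraph of $K_n$ inside each class while keeping $K_n$ as a topological minor and as a contraction.

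The host graph is the one already sketched in the introduction. Starting from an arbitrary drawing of $K_n$ in the sphere, I would subdivide every edge at each of its crossing points, obtaining a graph $\hat K_n$ that is a subdivision of $K_n$ and in which every edge is crossed at most once; hence $\hat K_n$ is $1$-nearly planar, and a fortiori $\xi$-nearly planar for every $\xi\geq 1$. Because $\hat K_n$ is a subdivision of $K_n$, the graph $K_n$ is both a topological minor of $\hat K_n$ and a contraction of $\hat K_n$ (contract each subdivision path back to a single edge). Choosing $n$ large enough as above settles the $\xi$-nearly planar case for both operations.

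For the $\lambda$-flat case it suffices to show that this same $\hat K_n$ is $2$-flat, since then it is $\lambda$-flat for every $\lambda\geq 2$ because $H^2\subseteq H^\lambda$ for every planar $H$. Here I would apply the plane transformation of Section~\ref{defprel} to $\hat K_n$: each crossing of a pair $e=\{x,y\}$, $e'=\{u,v\}$ is replaced by the $4$-cycle $xuyv$ and the edges $e,e'$ are deleted, producing a plane graph $P$ on the same vertex set $V(P)=V(\hat K_n)$. The point to check is that every edge of $\hat K_n$ is an edge of $P^2$: an uncrossed edge survives in $P$ and so has wall-by-wall distance $1$, while for a deleted edge $\{x,y\}$ the two gadget edges $\{x,u\}$ and $\{u,y\}$ are \emph{attached} at $u$ (they are consecutive in $\sigma_u$, the angular region between them being exactly where the deleted arc of $e'$ used to run), so they form an $(x,y)$-\wbw-walk of length $2$ and $\wbw_{P}(x,y)\leq 2$. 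Hence $\hat K_n\subseteq P^2$, so $\hat K_n$ is $2$-flat, and choosing $n$ large relative to $\lambda$ finishes the argument.

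The hard part is precisely this last verification that the plane transformation keeps the two endpoints of every deleted crossing edge within wall-by-wall distance $2$; once the local ``attachedness'' of the two gadget edges at a gadget corner is argued from the geometry of the construction, the rest is bookkeeping. I would also flag why the hypotheses $\lambda\geq 2$ and $\xi\geq 1$ are sharp: for $\lambda=1$ the claim is false, since $1$-flat graphs are exactly the planar graphs and planarity is closed under both taking topological minors and contractions.
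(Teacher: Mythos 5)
Your proof is correct, and its core is the same as the paper's: the density obstruction (Corollary~\ref{smcor} for $\lambda$-flat graphs, the Pach--Tóth bound~\cite{PachT97grap} for $\xi$-nearly planar graphs) combined with the plane-transformation gadget, where the face bounded by the added cycle $xuyv$ puts the endpoints of each deleted crossing edge at wall-by-wall distance $2$ — this is word-for-word the paper's argument that its host graph is $2$-flat. Where you genuinely diverge is the contraction case. The paper uses a second, separate construction there: it takes $H_n$ to be a $(2n\times 2n)$-grid, sets $G_n=H_n^2$ (which is $2$-flat by definition), and contracts the grid to $K_{n,n}$ and then to $K_n$. You instead reuse the single host $\hat K_n$, observing that a subdivision of $K_n$ contracts back to $K_n$ by collapsing each subdivision path; this is valid under the paper's definition of contraction (which produces simple graphs), so one example settles both relations and, since $\hat K_n$ is visibly $1$-nearly planar, both graph classes at once. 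Your route is more economical and uniform; the paper's grid example buys a different kind of insight, namely that non-closure under contraction already occurs for powers of the simplest plane graphs rather than only for crossing-gadget constructions. One imprecision to fix: you cannot subdivide an edge \emph{at} its crossing points — a vertex placed at a crossing would planarize the drawing and the result would no longer be a subdivision of $K_n$. The correct statement, which is clearly your intent and matches the paper's introduction, is to place a subdivision vertex inside each segment of an edge delimited by consecutive crossings, so that every edge of $\hat K_n$ is crossed at most once. Your closing remark on sharpness (for $\lambda=1$ the class of $1$-flat graphs is exactly the planar graphs, which are closed under both operations) is a correct addition not present in the paper.
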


\begin{proof}
We present the proof for the case of $\lambda$-flat graphs. The
proof for $\xi$-nearly planar graphs is very similar and uses the same gadgets.

Let $K_{n}$ be a complete graph that is not $\lambda$-flat, for some integer $\lambda$
(such a value for $n$ exists because of Corollary~\ref{smcor}).
We will show that a $2$-flat graph $G_{n}$ contains the clique $K_{n}$
as a topological minor/contraction.

For the case of the topological minor relation, we construct $G_{n}$ as follows.
Let $H_{n}$ be the plane transformation of an embedding of $K_{n}$.
Let also $G_{n}$ be the graph obtained after the third
step of the plane transformation of $K_{n}$ to $H_{n}$, namely before eliminating the pair of crossed edges.
By construction, the  graph $G_{n}$ contains $K_n$ as a topological minor. We now claim that $G_{n}$ is $2$-flat. To see this, consider the plane graph $H_{n}$.
For any pair of deleted crossed edges $(x,y)$ and $(u,v)$, $H_{n}$ has a face 
bounded by the cycle $xuyv$. This implies that
\wxy{H_{n}}$=2$ and \wbw$_{H_{n}}(u,v)=2$. Thus $H_{n}^2$ contains all deleted edges. This implies that 
$G_{n}\subseteq  H_{n}^{2}$ and therefore $G_{n}$ is $2$-flat.

For the case of the contraction relation, we construct $G_{n}$ as follows.
Let $H_{n}$ be a $(2n\times 2n)$-grid and let $G_{n}=H_{n}^{2}$.
Certainly, $G_{n}$ is a $2$-flat graph. 
Moreover, it is easy to verify that $H_{n}$ can be contracted to the complete bipartite $K_{n,n}$
and this in turn to a clique of $n$ vertices. Therefore the same holds for $G_n$.
\end{proof}

By Corollary~\ref{smcor}, there is no $\lambda$ such that all graphs are $\lambda$-flat. 
We define the following two functions from graphs to integers 
\begin{align}
\ourlambda(G) &= \min\{\lambda\mid G \mbox{\ is $\lambda$-flat}\} \text{ and}\notag\\
\ourxi(G) &= \min\{\xi\mid \mbox{$G$ is $\xi$-nearly planar}\}.\notag
\end{align}
According to Theorem~\ref{equiv}, the two functions  $\ourlambda$ 
and $\ourxi$ are parametrically equivalent, i.e., the one is bounded by a function of the other. This means that the {\sl wall-by-wall} metric permits us to treat  $\xi$-nearly planar graph embeddings
as subgraphs of powers of plane graphs. 

\section{Proof of equivalence}
\label{parma}

%
This section is dedicated to the proof of Theorem~\ref{equiv}.


%



\begin{lemma}
\label{cross:flat}
Let $G$ be a simple $\xi$-nearly plane graph on $n$ vertices, Then, there exists a
graph $H$ on $O_{\xi}(n)$ vertices such that $G\subseteq_{\tau} H^{2\xi}$ via an injection $\tau: V(G)\rightarrow V(H)$,
where every vertex in $H$ is within {\bf wbw}-distance at most $\xi$ from some vertex of $\tau(V(G))$.
\end{lemma}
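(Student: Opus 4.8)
The plan is to take a $\xi$-nearly planar embedding of $G$ and apply the \emph{plane transformation} described in Section~\ref{defprel} to produce the plane graph $H$. The key observation is that the plane transformation replaces each crossing by a small gadget (the four-cycle $xuyv$), while the original vertices of $G$ survive as vertices of $H$. So I would define $\tau$ to be the identity on $V(G)$ (viewed inside $H$), and the whole task reduces to (i) bounding $|V(H)|$ and the \wbw-distance of every new vertex to $\tau(V(G))$, and (ii) showing that each original edge of $G$, now a crossed curve cut into segments by the gadgets, reappears as an edge of $H^{2\xi}$, i.e. that the two endpoints of each edge are at \wbw-distance at most $2\xi$ in $H$.

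\textbf{Vertex count and locality.} Since $G$ is $\xi$-nearly planar with at most $4.108\sqrt{\xi}\,n$ edges (by Pach--Tóth), each edge is crossed at most $\xi$ times, so the total number of crossings is $O_\xi(n)$. Step~(b) of the transformation subdivides each edge once per crossing it is involved in, and steps~(c)--(d) add one four-cycle per crossing; hence $H$ has $O_\xi(n)$ vertices, giving the claimed bound. For the locality condition, every vertex of $H$ that is not an original vertex of $G$ lies on some gadget arising from a crossing; since each gadget sits on an edge $e$ of $G$ that is crossed at most $\xi$ times, walking along the walls corresponding to the subdivided pieces of $e$ back toward an endpoint of $e$ traverses at most $\xi$ attached edges, so each such vertex is within \wbw-distance $\xi$ of a vertex in $\tau(V(G))$.

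\textbf{Recovering the edges of $G$.} This is the heart of the argument. Fix an edge $e=\{x,y\}$ of $G$ crossed $c\le \xi$ times. In $H$, the curve of $e$ has been chopped into $c+1$ consecutive segments; at each crossing the segment is ``re-routed'' around a gadget four-cycle rather than passing straight through. I would trace a \wbw-walk from an edge incident to $x$ to an edge incident to $y$ by following these consecutive segments: within a single uncrossed stretch the attachment relation lets me move along a wall essentially for free, and at each of the $c$ crossings the gadget edges (the sides of the cycle $xuyv$, which are pairwise attached and attached to the incoming/outgoing segments) let me step across using a bounded number of attached edges. The bookkeeping should show that each crossing costs at most a constant number of \wbw-steps, and tuning the gadget so that each crossing contributes at most $2$ steps yields total length at most $2c \le 2\xi$, so $\wbw_H(x,y)\le 2\xi$ and $\{x,y\}\in E(H^{2\xi})$.

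\textbf{The main obstacle} I expect is the precise \wbw-accounting at the gadgets: the wall-by-wall distance counts \emph{edges} in a walk of pairwise-attached edges, and it is governed by the cyclic orders $\sigma_v$ at the gadget vertices, not by ordinary graph distance. I must verify that consecutive segments of $e$ remain attached (or are bridged by gadget edges that are attached) in the induced rotation system of $H$, and that no crossing forces more than the budgeted number of steps; Observation~\ref{obs:tdhisplo}, which translates \wbw-distance in $H$ into ordinary distance in the medial $M_H$, is the natural tool for making this accounting rigorous and for confirming the constant $2\xi$ rather than some larger multiple of $\xi$.
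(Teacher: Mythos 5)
Your proposal follows the paper's own argument almost exactly: the paper likewise takes $H$ to be the plane transformation of the $\xi$-nearly planar embedding, notes that an edge crossed $\xi'\leq\xi$ times is subdivided with $\xi'-1$ vertices so that its endpoints are joined in $H$ by a \wbw-path of length $2\xi'\leq 2\xi$ running along the gadget cycles (two \wbw-steps per crossing, as in your accounting), and derives the locality bound from the fact that every non-original vertex of $H$ lies on such a path, hence within \wbw-distance $\xi$ of one of the edge's endpoints. Your additional remarks (the explicit $O_\xi(n)$ vertex count via Pach--Tóth sparsity, and the caution about verifying attachment in the rotation system at gadget vertices) only make explicit details the paper leaves implicit.
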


\begin{figure}[h]
\begin{center}
	\includegraphics[width= 4.9in]{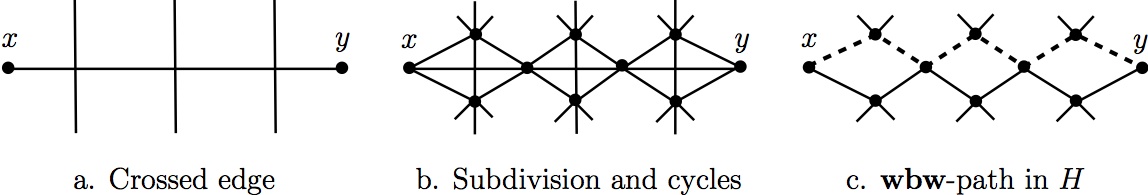}
 \end{center}
 	\caption{\small Edge crossed $k$ times and \wbw-path of length $2k.$}\label{fig:k-cross} 
 \end{figure}
 
\begin{proof}
Let $H$ be the plane transformation of $G$.
Consider an edge $(x,y)\in E(G)$ crossed $\xi'\leq \xi$ times. Notice, for such an edge, it is sufficient to subdivide it with $\xi'-1$ additional vertices.
Then, by construction, in $H$ there is a $(x,y)$-\wbw-path of length $2\xi$ (for an illustration see Figure~\ref{fig:k-cross}). Thus, $G\subseteq H^{2\xi}$. 
Moreover, each vertex on this path has \wbw-distance at most $\xi$ from either $x$ or $y$. 
And since all vertices of $H$ not in $\tau(V(G))$ are created as a subdivision of an edge of $G$,
every vertex in $H$ is within {\bf wbw}-distance at most $\xi$ from some vertex of $\tau(V(G))$.
\end{proof}


\begin{lemma}
\label{triangulate}

Let $H$ be a simple and connected plane graph. Then, there exists 
a simple plane graph $H_{\Delta}$ and an injection $\tau: V(H)\rightarrow V(H_{\Delta})$ such that
\begin{enumerate}
\setlength{\itemsep}{-.0pt}
\item $H_{\Delta}$ is $3$-connected and triangulated,
\item $H\subseteq_{\tau} H_{\Delta}$,
\item for every $x,y\in V(H)$ it holds that $\wbw_{H_{
\Delta}}(x,y)\leq 4\cdot \wbw_{H}(x,y)$,
\item every vertex in $H_{\Delta}$ is within {\bf wbw}-distance at most $4$ from some vertex of $\tau(V(H))$, 
\item for every non-negative integer $\lambda,$ $H^{\lambda}\subseteq H_{\Delta}^{4\lambda}$ (where $H_{\Delta}^{4\lambda}=(H_{\Delta})^{4\lambda}$).
\end{enumerate}
\end{lemma}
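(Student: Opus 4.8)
The plan is to transform $H$ into a simple \emph{triangulation} $H_{\Delta}$ (a simple plane graph all of whose faces, including the outer one, are bounded by triangles) by filling faces, while keeping every vertex and edge of $H$ intact so that property~2 holds for the inclusion injection $\tau\colon V(H)\hookrightarrow V(H_{\Delta})$. Two observations drive the construction. First, a simple triangulation on at least four vertices is automatically $3$-connected, so property~1 follows for free once $H_{\Delta}$ is a simple triangulation (if $|V(H)|<4$ I first pad $H$ by joining it to a few extra vertices inside a face). Second, property~5 is an immediate consequence of~2 and~3: if $\{x,y\}\in E(H^{\lambda})$ then $\wbw_H(x,y)\le\lambda$, so by property~3 $\wbw_{H_{\Delta}}(\tau(x),\tau(y))\le 4\lambda$ and hence $\{\tau(x),\tau(y)\}\in E(H_{\Delta}^{4\lambda})$. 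Thus only properties~1--4 need work, and the real content is the distance control in~3 and~4.

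The construction proceeds in two rounds, each adding vertices and edges inside faces and each costing a factor $2$ in \wbw-distance. \emph{Round~1 (making every face a simple cycle).} A connected plane graph has all faces bounded by simple cycles exactly when it is $2$-connected; the obstructions are bridges and cut vertices, which force some face's boundary walk to repeat an edge or a vertex. I would remove these one face at a time: in each face whose boundary walk is not a simple cycle I insert a small gadget (for a bridge $\{u,v\}$, a single new vertex $w$ joined to $u$ and $v$, turning the bridge into a triangle; for a repeated cut vertex, a new vertex joined to the boundary so as to separate its occurrences), chosen to keep the graph simple and planar and to make every new vertex adjacent to a vertex of $H$. Call the result $H_{2}$. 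Since each gadget inserts at most one new edge into any single corner at a vertex of $H$, an original pair of attached edges is separated by at most one inserted edge, so every \wbw-step of $H$ is simulated by at most two steps of $H_{2}$; hence $\wbw_{H_{2}}(x,y)\le 2\,\wbw_H(x,y)$ for $x,y\in V(H)$, and every new vertex lies at \wbw-distance at most $2$ from $V(H)$.

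\emph{Round~2 (stellation).} Now that every face of $H_{2}$ is bounded by a simple cycle, I add in each face $f$ a new \emph{center} $c_f$ and join it to all vertices on the bounding cycle of $f$. Because that cycle is simple, the spokes reach distinct vertices and no parallel edge is created, so $H_{\Delta}$ is simple; moreover $f$ is cut into the triangles $c_f v_i v_{i+1}$, one per boundary edge, so $H_{\Delta}$ is a triangulation and properties~1 and~2 hold. For the distance bound, note that around any vertex $v$ the edges of $H_{\Delta}$ are exactly those of $H_{2}$ at $v$ \emph{interleaved} with one spoke per corner: between two edges consecutive (attached) at $v$ in $H_{2}$ now sits the spoke $v\,c_f$ of the face $f$ occupying that corner, and this spoke is attached to both of them. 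Thus a shortest $(x,y)$-\wbw-walk of $H_{2}$ lifts to one of $H_{\Delta}$ by inserting a single spoke between each consecutive pair, at most doubling its length; hence $\wbw_{H_{\Delta}}(x,y)\le 2\,\wbw_{H_{2}}(x,y)\le 4\,\wbw_H(x,y)$, giving property~3. Finally each center $c_f$ is joined by a spoke to a vertex of $H_{2}$, so it lies at \wbw-distance $1$ from $V(H_{2})$ and therefore at most $2$ from $V(H)$; with the Round~1 vertices this yields the bound $4$ of property~4.

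The main obstacle is Round~1: one must augment a possibly $1$-connected (or tree-like) plane graph to a $2$-connected one while (i) preserving simplicity, (ii) making each new vertex adjacent to $V(H)$, and (iii) breaking at most one original attachment per corner. The stellation of Round~2 is the clean part; its only subtlety is checking that the inserted spoke genuinely lands in the corner between the two formerly attached edges, which holds because that corner is a single face of $H_{2}$ whose center is joined to $v$ there. A secondary point is that the lifted walks use distinct edges: a shortest walk never needs to repeat a corner, so the displayed bound $2\ell-1$ survives, and any repetition could only shorten the walk.
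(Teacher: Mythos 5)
Your overall architecture is essentially the paper's: first augment $H$ inside its faces so that every face is bounded by a simple cycle, then triangulate by putting a new vertex in each face joined to all vertices of its boundary, paying a factor of $2$ in \wbw-distance per round. Your Round~2 is exactly the paper's second step (the paper forms the union of its augmented graph $\overline H$ with the radial graph $R_{\overline H}$, which is precisely your stellation), your derivation of property~5 from properties~2 and~3 is the paper's, and the two factors of $2$ reproduce the paper's count of three inserted edges per original attachment (length $4\lambda'-3$). Round~2 itself is sound: one spoke lands in each corner because face boundaries are simple cycles, and the standard fact that a simple plane triangulation on at least four vertices is $3$-connected gives property~1.

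The genuine gap is Round~1, and you flagged it yourself without closing it. The whole distance bound hinges on the invariant that each corner of $H$ (each pair of attached $H$-edges at a vertex $v$) receives \emph{at most one} inserted edge over the entire augmentation, and this is asserted, not proved. Your gadgets are applied iteratively, one face at a time, with nothing coordinating where their edges land: if two bridges $va$ and $vb$ are attached at $v$, the two bridge gadgets may each place a new edge in the corner between $va$ and $vb$, after which simulating that single \wbw-step costs $3$ edges, not $2$; the cut-vertex gadget is specified only as ``a new vertex joined to the boundary so as to separate its occurrences'' (joined to which vertices? drawn in which corners?), and such edges can pollute corners already hit by bridge gadgets; finally, termination of the iteration in a $2$-connected graph, with every new vertex adjacent to $V(H)$, is not argued. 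The paper dissolves all of this with one uniform construction in place of your gadgets: around \emph{every} vertex $v$ it adds exactly one new vertex per corner of $v$, joins each to $v$, and links the new vertices inside each face into a cycle. This enforces ``one inserted edge per corner'' by fiat, keeps every new vertex adjacent to $V(H)$, preserves simplicity and planarity, and even gives $3$-connectivity outright; the radial/stellation step then finishes as in your Round~2. (A smaller blemish: your property~4 arithmetic is muddled — distances measured in $H_2$ are compared with distances in $H_{\Delta}$ without accounting for the doubling — though it is repairable, since any new vertex adjacent to a vertex of $\tau(V(H))$ is at \wbw-distance $1$ from it via that single edge.) So: right strategy and a correct second half, but the first half needs the paper's explicit construction, or an equally explicit one, to be a proof.
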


\begin{proof}
For every vertex $v$ of $H$, where ${\rm \Sigma}_v$ denotes the cyclic order of its edges,
 we do the following.
If its degree $d(v)$ is not less than two,
we add in the plane $|d(v)|$ vertices and join them to $v$, 
so that between every two consecutive edges in ${\rm \Sigma}_v$ now lies a new edge.
Otherwise, if $d(v)=1$, we add two vertices and join them to $v$ and if 
$d(v)=0$ (i.e. $G$ contains only one vertex),  we add three 
vertices and join them to $v$. 
Next, for every face $f$ in the given embedding of $H$, we add edges between the new vertices that lie now inside $f$, which form a cycle that bounds an empty open disc in the plane.

Note that in the procedure described, all added edges can be drawn so that they don't cross each other or already existing edges. Thus, the resulting graph is again plane. We denote it as $\overline H$.

Notice that we have effectively surrounded each edge of  $H$ with two more edges in $\overline H$, in a way that each pair of vertices in $\overline H$, is connected by at least three disjoint paths. Hence, $\overline H$ is $3$-connected.

Consider now a particular triangulation of $\overline H$, we denote it by $H\tria$,  which is the union of $\overline H$ and its
radial $R_ {\overline H}$, 
i.e. the graph with vertex set $V(R_ {\overline H})$ and edge set $E(H)\cup E(R_ {\overline H})$. As $H$ is a simple graph, all faces of $\overline{H}$ are 
open disks with at least 3 vertices in their boundary. This means that the graph 
$H\tria$ is also 3-connected and therefore it 
satisfies the first two properties of the lemma. 

\begin{figure}
\begin{center}
	\includegraphics[width= 5 in]{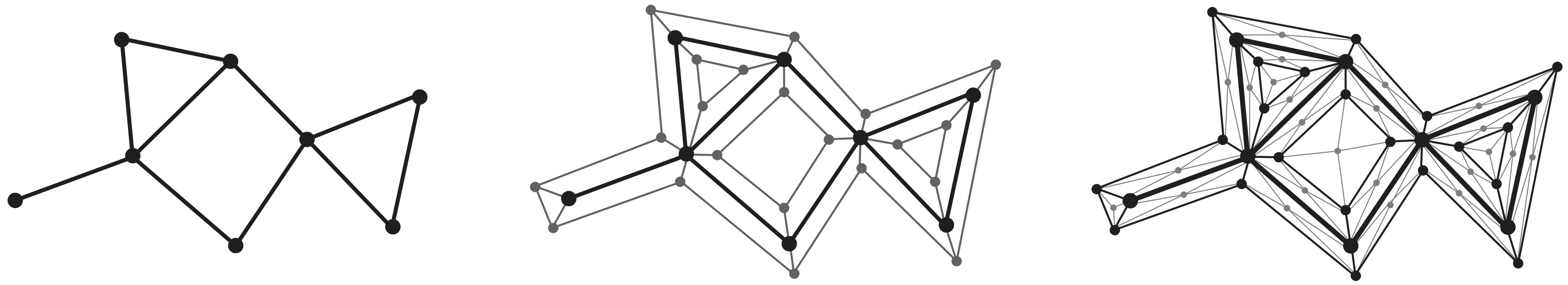}
	 \end{center}
	 \caption{\small An example of the transformation of a graph $H$ to the graphs $\overline{H}$ and $H_{\Delta}$ (in the drawing of  $H_{\Delta}$ we 
	omitted the vertex that corresponds to 
	the  external face of $\overline{H}$ and its incident edges).}\label{fig:ndododsde} 
\end{figure}

\smallskip
Let $x,y$ be arbitrary two vertices in $H$. Consider a $(x,y)$-\wbw-path $P$ in $H$ of length $\lambda'\leq \lambda$. We construct the corresponding \wbw-path $P\tria$ in $H\tria$ where, for every two sequential edges $e_i,e_{i+1}$ of $P$, we follow the steps below. 
The edges $e_i,e_{i+1}$ are attached in $H$. Let $w$ be their common endpoint. By construction, between them lies an edge of $\overline H$, say $\overline e$. Add this edge to the path $P\tria$. Since $\overline H$ is $3$-connected, $(e_i,\overline e)$ and $(\overline e,e_{i+1})$ are incident on two different faces of $\overline H$.
Add the two edges of $R_ {\overline H}$ that have as one endpoint the vertex $w$ and as the other the vertices of $R_ {\overline H}$ in the faces of $\overline H$ incident to $(e_i,\overline e)$ and $(\overline e,e_{i+1})$. Observe, the resulting sequence of edges is a $(x,y)$-\wbw-path in $H\tria$. 
Since we inserted three edges between every two sequential edges of $P$ and the length of $P$ is $\lambda'$, 
the length of the $(x,y)$-\wbw-path $P\tria$ in $H\tria$ is $4 \lambda'-3$ and property 3 holds. 
Note, that this also immediately implies property 5. 

To see that property 4  holds, recall that if a vertex $v$ of $H\tria$ is not in $\tau(V(H))$, 
then either $v\in V(\overline H)\setminus\tau(V(H))$ so $\wbw(v,x)=1$ for a vertex $x \in\tau(V(H))$, 
or $v\in V(R_ {\overline H})$ so $\wbw(v,y)=1$ for a vertex $y \in V(\overline H)$. In the worst case, a \wbw-path of length $4$ in $H\tria$ connects $v$ with a vertex in $\tau(V(H))$.
\end{proof}

\begin{lemma}
\label{flat:cross}
Let $H$ be an $n$-vertex  3-connected triangulated planar graph and $\lambda$ a positive integer. Then, there exists an embedding of $H^{\lambda}$ with at most $2^{\lambda} $ crossings per edge.
\end{lemma}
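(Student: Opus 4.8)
The plan is to start from the given planar embedding of $H$ (which has no crossings) and to insert each power edge $\{x,y\}\in E(H^\lambda)\setminus E(H)$ as a curve that runs inside a thin tubular neighbourhood of $H$, following a shortest $(x,y)$-\wbw-path. Concretely, I would fix for each such edge a \wbw-path $e_1,\dots,e_d$ with $d=\wxy{H}\le\lambda$, written as a vertex sequence $x=u_0,u_1,\dots,u_d=y$, then thicken every edge of $H$ into a narrow strip and every vertex into a small disc, and draw the curve representing $\{x,y\}$ so that it hugs the walls $e_1,\dots,e_d$ one after another, rounding the corner at each intermediate vertex $u_i$ through the face wedged between the two consecutive (hence \emph{attached}) walls $e_i,e_{i+1}$. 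Because $H$ is triangulated, every such corner is a genuine triangular face, so the corner-rounding is unambiguous and introduces no crossing with the boundary of that face. The whole problem then reduces to bounding, over all inserted curves, the number of crossings that accumulate on any single edge.

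It is convenient to read a \wbw-path as a walk in the medial graph $M_H$ (Observation~\ref{obs:tdhisplo}): the walls of the path are vertices of $M_H$ and attachedness is adjacency in $M_H$, so the curve can be drawn essentially parallel to this medial walk. Along a single wall $e_i$ the curve stays on one side; it is forced to cross $e_i$ only when the corner it rounds at $u_{i-1}$ and the corner it rounds at $u_i$ lie on opposite sides of $e_i$, and such a side-switch happens at most once per internal wall. Hence a single power edge crosses the edges of $H$ at most $d\le\lambda$ times, so the original edges of $H$ are easy to control, and the real contribution to the crossing count comes from crossings \emph{between} two inserted power edges that travel along overlapping wall sequences.

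To bound these mutual crossings I would argue by induction on $\lambda$, building the drawing of $H^\lambda$ from that of $H^{\lambda-1}$. For triangulated $H$ the base is immediate: $H^1=H$ is already planar (indeed no edge of \wbw-distance exactly $2$ exists, since two consecutive walls at a vertex bound a triangle and thus already form an edge of $H$), so we have $0\le 2^1$ crossings. For the step, decompose a distance-$\lambda$ edge $\{x,y\}$ into its first wall $e_1=\{x,u_1\}$ followed by a $(u_1,y)$-\wbw-path of length $\lambda-1$; thus $\{u_1,y\}$ is already an edge of $H^{\lambda-1}$ and its curve is already drawn. I would route $\{x,y\}$ by hugging $e_1$ from $x$ to the vicinity of $u_1$ and then running \emph{parallel}, infinitesimally close, to the already-drawn curve of $\{u_1,y\}$. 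A parallel strand crosses exactly the same edges as the curve it shadows, so each existing crossing spawns at most one new crossing nearby, which is precisely the mechanism that would give the doubling $2^{\lambda-1}\to 2^{\lambda}$.

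The hard part will be controlling this \emph{congestion}: a single curve of $H^{\lambda-1}$ can be the natural parent of several distance-$\lambda$ edges (all the edges $\{x,y\}$ whose shortest path starts with a wall incident to the same $u_1$), and if many strands shadow the same curve the per-edge count could blow up well beyond $2^\lambda$. Overcoming this requires a careful choice of the \wbw-path and of the side (left or right of each wall) on which each strand is routed, together with a bundling argument inside the vertex-discs showing that the strands entering a disc can be reordered so that no wall is shadowed more than once per existing crossing. Here the triangulation and $3$-connectivity of $H$ are exactly what make such a consistent side-assignment possible, since every wall is flanked by two triangles and every corner behaves predictably. Once the bundle sizes along each wall are shown to at most double from one level to the next, combining this with the inductive bound along the at most $\lambda$ walls of any curve yields the claimed $2^\lambda$ crossings per edge.
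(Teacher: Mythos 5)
Your routing setup---drawing each new edge of $H^{\lambda}$ along a shortest \wbw-path, passing through the triangular face wedged between each pair of consecutive attached walls---is exactly the paper's starting point, and your observation that a single new edge therefore crosses only $O(\lambda)$ \emph{old} edges matches the paper's count (there, the new edge crosses $e_2,\dots,e_{\ell-1}$, i.e.\ at most $\lambda-2$ old edges). But the proposal stops precisely where the lemma's content begins: bounding crossings \emph{between new edges}. Your inductive doubling mechanism (``each existing crossing spawns at most one new crossing'') is only valid if each curve of $H^{\lambda-1}$ is shadowed by at most one level-$\lambda$ strand. You acknowledge this yourself, and then defer the resolution to an unspecified ``bundling argument'' showing that ``bundle sizes along each wall at most double from one level to the next.'' That claim is never established, and it is the whole difficulty: if a parent curve carrying $c$ crossings is shadowed by $k$ strands, then every edge that curve crosses gains $k$ additional crossings, so per-edge counts are \emph{multiplied} by the congestion, not doubled; bounding the congestion is equivalent to bounding how many power edges route through a given wall or face, which is exactly the counting your sketch omits. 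As written, the induction does not close.

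The paper avoids induction entirely and does a direct global count exploiting the triangulation: a new edge that enters a triangular face by crossing one of its edges either crosses one of the other two edges or terminates at the opposite vertex, so the routes emanating from a fixed vertex branch with factor at most $2$ per face traversal. This gives at most $2^{\lambda-2}-1$ vertices reachable by a new edge per face incident to a vertex, hence a bound of $(\lambda-3)\cdot 2^{\lambda-2}+1$ new edges crossing any fixed old edge, a bound of $(3\lambda-6)\cdot 2^{\lambda-3}$ new edges passing through any single face, and finally---since each new edge traverses at most $\lambda-1$ faces---a bound of $(\lambda-1)\cdot(3\lambda-6)\cdot 2^{\lambda-3}$ on the crossings of any new edge. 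To repair your proof you would need to import a branching-factor count of this kind to control the bundle sizes; once you have it, the inductive scaffolding becomes unnecessary, since the count alone already bounds the crossings of every edge in the single drawing you constructed in your first paragraph.
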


\begin{proof}
Given a plane graph $H$  and an integer $\lambda\geq 1$, the edges of 
$E(H)$ are called {\em old} edges of $H^{\lambda}$ while the edges in $E(H^\lambda)\setminus E(H)$ are called {\em new} edges of $H^{\lambda}$.

Consider two arbitrary vertices $x,y\in V(H\tria)$ with $\wxy{H\tria}\leq \lambda$, i.e. there exists a
$(x,y)$-\wbw-path $(e_1,e_2,\dots,e_\ell),\ \ell\leq \lambda$. We have to draw a new edge $e$ in $H\tria^\lambda$ joining $x$ and $y$. Let $e_1$ and $e_{\ell}$ be incident
to $x$ and $y$, respectively. Since sequential edges in the \wbw-path are incident to a face, for any $1\leq i\leq \ell-1$, edges $e_i$ and $e_{i+1}$
characterize uniquely the shared face $f_i$. We start drawing $e$ in $f_1$ crossing $e_2$. Then, $e$ goes through $f_2$ crossing $e_3$ and so on till it reaches
$y$ through $f_{\ell-1}$. So, in total $e$ passes through $\ell-1< \lambda$ faces.

Observe that, for every new edge crossing an old edge $e$ and entering face $f$, there are three possibilities: either crossing one of the other two old edges
incident to $f$ or to end in the vertex of the triangle lying opposite to $e$. Since in a triangulated graph all vertices within \wbw-distance of at most 2 are already joined, this yields that  from any vertex $v$ of $H\tria^\lambda$ we can reach with new edges at most $2^{\lambda-2}-1$ other vertices, per each face incident to $v$.
Calculating carefully, we derive that any old edge can be crossed by at most $(\lambda-3)\cdot 2^{\lambda-2}+1$ new edges. 

Now, let us count how many times the new edges can be crossed. 
First we count that the total number of new edges passing through a face of $H\tria$ is at most 
$(3\lambda-6)\cdot 2^{\lambda-3}$. 
Exaggeratively, assume that every new edge is crossed within the face by all other entering edges, i.e. a new edge is crossed inside the face at most $(3\lambda-6)\cdot 2^{\lambda-3}-1$ times. 
Any new edge goes through at most $\lambda-1$ faces and crosses additionally at most
$\lambda-2$ old edges. 
Therefore, any new edge is crossed at most $(\lambda-1)\cdot (3\lambda-6)\cdot 2^{\lambda-3}< 2^{\lambda}$ times.
\end{proof}

Combining the previous lemmata we prove the following corollary, which immediately implies Theorem~\ref{equiv}.

\begin{corollary}
\label{lambda:xi}
For every graph $G$, $\ourlambda(G)/2\leq \ourxi(G)\leq 2^{8\cdot\smallourlambda(G)}$.
\end{corollary}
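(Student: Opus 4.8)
The plan is to prove the two inequalities separately, reading each as a statement about the optimal parameters $\ourlambda(G)$ and $\ourxi(G)$ and feeding the three preceding lemmata into them. The left inequality $\ourlambda(G)/2\le\ourxi(G)$ will follow directly from Lemma~\ref{cross:flat}, while the right inequality $\ourxi(G)\le 2^{8\cdot\smallourlambda(G)}$ will come from chaining Lemma~\ref{triangulate} into Lemma~\ref{flat:cross} and invoking the closure of $\xi$-nearly planar graphs under subgraphs.

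For the lower bound I set $\xi=\ourxi(G)$ and fix a $\xi$-nearly planar embedding of $G$. Lemma~\ref{cross:flat} produces a plane graph $H$ with $G\subseteq_{\tau} H^{2\xi}$, which is exactly the assertion that $G$ is $2\xi$-flat. By the definition of $\ourlambda$ this gives $\ourlambda(G)\le 2\xi=2\,\ourxi(G)$, that is, $\ourlambda(G)/2\le\ourxi(G)$.

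For the upper bound I write $\lambda=\ourlambda(G)$ and fix a simple plane graph $H$ with $G\subseteq H^{\lambda}$. Before applying Lemma~\ref{triangulate} I would first reduce to the case that $H$ is connected: adding edges joining distinct components of $H$ keeps $H$ plane and can only shrink wall-by-wall distances, so $H^{\lambda}$ remains a subgraph of the power of the augmented connected graph. Lemma~\ref{triangulate} then yields a $3$-connected triangulated plane graph $H\tria$ with $H^{\lambda}\subseteq H\tria^{4\lambda}$ (its property~5), so that $G\subseteq H\tria^{4\lambda}$. Applying Lemma~\ref{flat:cross} to $H\tria$ with parameter $4\lambda$ gives an embedding of $H\tria^{4\lambda}$ in which every edge is crossed at most $2^{4\lambda}$ times. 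Since $\xi$-nearly planarity is closed under subgraphs, deleting the edges outside $G$ only removes crossings, so $G$ inherits an embedding with at most $2^{4\lambda}$ crossings per edge; hence $\ourxi(G)\le 2^{4\lambda}\le 2^{8\lambda}=2^{8\cdot\smallourlambda(G)}$, the stated bound being comfortably loose.

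The routine part is the bookkeeping of which lemma supplies which parameter. The step I expect to require the most care is the composition in the upper bound: verifying that the subgraph, power, and embedding relations compose in the right direction — that $G\subseteq H^{\lambda}$ together with $H^{\lambda}\subseteq H\tria^{4\lambda}$ genuinely produces a $2^{4\lambda}$-nearly planar embedding of $G$ — and dispatching the disconnected and non-simple corner cases so that Lemma~\ref{triangulate} and Lemma~\ref{cross:flat} apply exactly as stated.
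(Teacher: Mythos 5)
Your overall route is the same as the paper's: the lower bound via Lemma~\ref{cross:flat} is word-for-word the paper's argument, and the upper bound chains Lemma~\ref{triangulate} into Lemma~\ref{flat:cross} exactly as the paper does. The genuine problem is your connectivity reduction. You claim that adding edges joining distinct components of $H$ ``can only shrink wall-by-wall distances.'' This is false: the \wbw-metric is not monotone under edge addition, because attachment of two edges at a vertex $v$ is defined through consecutiveness in the cyclic order $\sigma_v$, and inserting a new edge at $v$ between two consecutive edges destroys their attachment whenever $v$ has degree at least $3$. For instance, in a star with center $u$ and leaves $x_1,x_2,x_3$, the edges $\{u,x_1\}$ and $\{u,x_2\}$ are attached, so $\wbw(x_1,x_2)=2$; after a new edge $\{u,w\}$ (joining a second component consisting of the single vertex $w$) is drawn between them in the rotation at $u$, every $(x_1,x_2)$-\wbw-walk has length at least $3$. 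Hence $G\subseteq H^{\lambda}$ does not imply $G\subseteq H_{c}^{\lambda}$ for your augmented connected graph $H_{c}$. The paper avoids this entirely by working with the components separately: vertices in different components of the witness $J$ are at infinite \wbw-distance, so $G$ splits into subgraphs of the powers of the components; each piece is drawn with few crossings per edge by the remaining argument, and the drawings are placed in disjoint discs of the sphere, so $\ourxi$ of a disjoint union is the maximum of $\ourxi$ over its parts. (Your approach can be repaired by charging one extra edge per destroyed attachment, which gives $G\subseteq H_{c}^{2\lambda}$ and still lands inside the stated bound, but as written the monotonicity claim is wrong.)

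A second, smaller discrepancy: you take the witness $H$ of $\lambda$-flatness to be simple. Graphs in this paper may carry multiple edges, and Lemma~\ref{triangulate} is stated only for simple connected plane graphs; this is why the paper first subdivides every edge of the witness $J$ once, obtaining a simple plane graph $H$ with $G\subseteq H^{2\lambda}$, and that doubling is exactly the source of the exponent $8\lambda$ where you get $4\lambda$. If simplicity of the witness is granted (as the paper's definition of $H^{\lambda}$ arguably presumes), your version even yields the sharper bound $\ourxi(G)\leq 2^{4\cdot \smallourlambda(G)}$; otherwise you must include the subdivision step. Either way, once the connectivity step is repaired as above, the inequality of the corollary follows.
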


\begin{proof}
Let $J$ be a plane graph and $\lambda$ an integer such that $G\subseteq J^\lambda$.
W.l.o.g we assume that $J$ is connected (otherwise, we work with each of its connected components separately). Let $H$ be the graph obtained from $J$ after 
subdividing once each of it edges. Observe that $H$ is a simple and connected 
planar graph where  $G\subseteq H^{2\cdot \lambda}$.
Then, we may apply Lemma~\ref{triangulate} and obtain that $G$ is a subgraph of $H\tria^{8\lambda}$, for a planar 3-connected triangulated graph $H\tria$. 
By Lemma~\ref{flat:cross}, we can draw this graph on the plane with at most $2^{8\lambda}$ crossings per edge. We derive that $\ourxi(G)\leq2^{8\cdot \smallourlambda(G)}$. 
Consider now an embedding of $G$ with at most $\xi\leq\ourxi(G)$ crossings per edge. 
By Lemma~\ref{cross:flat}, there is a plane graph $G_1$ such that $G\subseteq G_1^{2\xi}$, i.e., $\ourlambda(G)\leq2\cdot\ourxi(G)$.
\end{proof}

\section{Open problems}
 In this note we introduced the concept of $\lambda$-flat graphs as an alternative 
for $\xi$-nearly planar graphs. We believe that this class has independent graph theoretic interest.
The first question is whether the exponential bound of Corollary~\ref{lambda:xi} 
can become polynomial. Also, it is an open issue whether this new 
concept can be useful to extend known algorithmic results and techniques 
on $\xi$-nearly planar graphs.

%

%

\bibliographystyle{plain}

\end{document}